\documentclass[11pt]{article}
\usepackage[a4paper,margin=1in]{geometry}
\usepackage{amsmath,amssymb,amsfonts,amsthm,mathtools}
\usepackage{algorithm}
\usepackage{algpseudocode}
\usepackage{graphicx}
\usepackage{subcaption}
\usepackage{booktabs}
\usepackage{enumitem}
\usepackage{xcolor}
\usepackage{placeins}
\usepackage[colorlinks=true,linkcolor=blue,citecolor=blue,urlcolor=blue]{hyperref}
\newtheorem{theorem}{Theorem}[section]
\newtheorem{proposition}[theorem]{Proposition}
\newtheorem{lemma}[theorem]{Lemma}
\newtheorem{corollary}[theorem]{Corollary}
\theoremstyle{definition}

\theoremstyle{remark}
\newtheorem{remark}[theorem]{Remark}

\newcommand{\E}{\mathbb{E}}
\newcommand{\R}{\mathbb{R}}
\newcommand{\ip}[2]{\left\langle #1,#2\right\rangle}
\newcommand{\norm}[1]{\left\lVert #1\right\rVert}
\newcommand{\argmin}{\operatorname*{arg\,min}}
\newcommand{\diam}{\operatorname{diam}}
\newcommand{\dom}{\operatorname{dom}}

\title{Conditional gradient methods on unbounded feasible regions}
\author{ R. D\'iaz Mill\'an \thanks{School of Information Technology, Deakin University, Waurn Ponds, Australia, e-mail:\url{r.diazmillan@deakin.edu.au}}
\and Tuan Thanh Lu\thanks{School of Information Technology, Deakin University, Melbourne,   Australia,  e-mail:\url{s224259805@deakin.edu.au}}
\and J. Ugon\thanks{School of Information Technology, Deakin University, Melbourne,   Australia,  e-mail:\url{j.ugon@deakin.edu.au}.}}
\date{}

\begin{document}
\maketitle

\begin{abstract}
The conditional gradient method is attractive when linear minimization over the feasible region is substantially cheaper than projection. Its classical convergence theory, however, is formulated for compact feasible sets, whereas many natural convex feasible regions are closed and unbounded. This paper studies a simple compact-restriction principle for applying conditional gradient steps to unbounded feasible regions. The first scheme uses one compact convex set containing the initial objective sublevel set. The second scheme updates the restriction by intersecting compact convex sets generated along the iterations. In both cases the linear minimization oracle is solved only over compact subsets, but the resulting objective values converge to the global optimum of the original problem, provided the compact restrictions contain the corresponding objective sublevel sets. For smooth convex objectives we obtain the standard superlinear convergence rate of the objective. We also record constructive restrictions based on strong convexity, exact sublevel sets, and epigraph caps, and include a nonsmooth conditional subgradient extension with a sublinear convergence rate under a curvature assumption. Numerical experiments illustrate the behaviour of the fixed and dynamic restrictions on unbounded feasible regions.
\end{abstract}

\noindent\textbf{Keywords.} Conditional gradient method; Frank--Wolfe method; unbounded feasible set; compact restriction; projection-free optimization; nonsmooth convex optimization.

\section{Introduction}

The Frank--Wolfe or conditional gradient method is one of the best-known first-order methods for constrained convex optimization
\cite{frank1956algorithm,jaggi2013,lacoste2015afw,beck2017,millan2026frank,millan2026frankwolfea}. Unlike projected-gradient methods, each iteration requires only the solution of a linear minimization oracle over the feasible region, thereby avoiding potentially expensive projection steps. This projection-free structure makes the method particularly attractive in large-scale optimization, machine learning, sparse optimization, matrix optimization, and problems with structured feasible regions where linear optimization is considerably cheaper than Euclidean projection
\cite{jaggi2013,lacoste2015afw,beck2017}. Owing to these advantages, conditional-gradient methods have received renewed attention in recent years, leading to numerous algorithmic variants and theoretical developments.

The classical convergence theory of the Frank--Wolfe method relies on the assumption that the feasible region is compact. Compactness guarantees that the linear minimization oracle is well defined and provides the bounded-diameter constants appearing in the standard curvature and convergence estimates
\cite{dunn1978,dunn1979,beck2017,HU2001}. However, many important optimization problems arise over feasible regions that are closed, convex and unbounded. In this setting, the linear minimization oracle may fail to attain a finite minimum even when the original optimization problem possesses a solution. Consequently, the classical Frank--Wolfe iteration cannot, in general, be applied directly.

Several approaches have been proposed to overcome this difficulty, either by modifying the conditional-gradient framework itself or by exploiting additional properties of the feasible region; see, for example,
\cite{wang2021unbounded_frankwolfe}. Extensions of the Frank--Wolfe methodology have also been investigated from different perspectives, including generalized notions of convexity that enlarge the class of admissible objective functions while preserving the projection-free nature of the method
\cite{millan2026frank}. The present work follows a complementary direction. Rather than relaxing convexity assumptions on the objective or introducing new conditional-gradient variants, we retain the classical Frank--Wolfe iteration and enlarge the class of admissible feasible regions.

More precisely, we consider optimization problems of the form
\begin{equation}\label{prob:main}
    \min\{f(x):x\in X\},
\end{equation}
where $X\subset\E$ is closed, convex and possibly unbounded. Our central idea is to solve the linear minimization oracle over compact convex subsets of $X$ that are guaranteed to contain an optimal solution of the original problem. We investigate two complementary strategies. The first employs a fixed compact restriction containing the initial objective sublevel set, while the second constructs a sequence of nested compact restrictions obtained by intersecting compact sets generated throughout the iterations. Both strategies preserve the projection-free character of the Frank--Wolfe method while ensuring that every iterate solves the original optimization problem rather than an approximation of it.

The novelty of the proposed approach lies not in modifying the Frank--Wolfe iteration itself, but in identifying verifiable compact restrictions of unbounded feasible regions on which the classical iteration remains well posed while preserving the solution set of the original optimization problem. This viewpoint allows the standard convergence analysis of the Frank--Wolfe method to be transferred almost unchanged to a broad class of optimization problems beyond the traditional compact-feasible-set setting. We further present constructive compact restrictions based on exact objective sublevel sets, strong convexity, and epigraph representations, together with a nonsmooth extension based on $\varepsilon$-subgradients. Numerical experiments illustrate the behaviour of both the fixed and dynamic compact-restriction strategies on representative optimization problems with unbounded feasible regions.
\section{Preliminaries and problem setting}

Throughout the smooth part of the paper, $\E$ denotes a finite-dimensional Euclidean space with inner product $\ip{\cdot}{\cdot}$ and induced norm $\norm{\cdot}$. We consider \eqref{prob:main}, where $X\subset \E$ is nonempty, closed and convex, and $f:X\to\R$ is convex and differentiable on an open set containing the compact sets used by the algorithms. We assume that the solution set
\[
    X^*:=\argmin_{x\in X} f(x)
\]
is nonempty, and we write $f^*:=\min_{x\in X} f(x)$.

For a feasible point $x\in X$, define the objective sublevel set
\[
    \mathcal L(x):=\{y\in X: f(y)\leq f(x)\}.
\]
Given a compact convex set $K\subset X$, the linear minimization oracle at $x\in K$ is
\begin{equation}\label{eq:lmo-general}
    s(x)\in \argmin_{s\in K}\ip{\nabla f(x)}{s-x}.
\end{equation}
The associated Frank--Wolfe gap over $K$ is
\begin{equation}\label{eq:fw-gap}
    g_K(x):=\ip{\nabla f(x)}{x-s(x)}.
\end{equation}
Since $x\in K$, the gap is nonnegative. If $K$ contains a global solution $x^*\in X^*$, then convexity gives
\begin{equation}\label{eq:gap-certificate}
    f(x)-f^* \leq \ip{\nabla f(x)}{x-x^*}\leq \ip{\nabla f(x)}{x-s(x)}=g_K(x).
\end{equation}
Thus the gap remains a valid global optimality certificate for the original problem whenever the compact restriction contains a solution.

We use the following standard descent estimate. If $f$ has $L$-Lipschitz continuous gradient on a convex set $K$, then
\begin{equation}\label{eq:descent-lemma}
    f(y)\leq f(x)+\ip{\nabla f(x)}{y-x}+\frac L2\norm{y-x}^2,
    \qquad x,y\in K.
\end{equation}

\section{A fixed compact restriction}

The first method assumes that one compact convex set contains the initial sublevel set. This is enough to guarantee that a global minimizer belongs to the set used by the linear oracle.

\begin{algorithm}[t]
\caption{Fixed compact-restriction conditional gradient method}\label{alg:fixed}
\begin{algorithmic}[1]
\Require $x_1\in X$, tolerance $\varepsilon>0$, compact convex set $C\subset \E$ such that $\mathcal L(x_1)\subset C$.
\State Set $K:=C\cap X$.
\For{$n=1,2,\ldots$}
    \State Compute
    \[
        s_n\in\argmin_{s\in K}\ip{\nabla f(x_n)}{s-x_n}.
    \]
    \State Set $g_n:=\ip{\nabla f(x_n)}{x_n-s_n}$.
    \If{$g_n\leq\varepsilon$}
        \State \textbf{stop}
    \EndIf
    \State Set $\alpha_n:=2/(n+1)$ and update
    \[
        x_{n+1}:=x_n+\alpha_n(s_n-x_n).
    \]
\EndFor
\end{algorithmic}
\end{algorithm}

\begin{theorem}\label{thm:fixed-rate}
Let $X\subset \E$ be nonempty, closed and convex. Let $f$ be convex and differentiable on an open set containing $K:=C\cap X$, where $C\subset \E$ is compact and convex, and suppose that $\mathcal L(x_1)\subset C$. Assume that $X^*\neq\varnothing$ and that $\nabla f$ is $L$-Lipschitz continuous on $K$. Let $\{x_n\}$ be generated by Algorithm~\ref{alg:fixed} without early termination, and set $D:=\diam(K)$. Then $x_n\in K$ for all $n\geq1$, and
\begin{equation}\label{eq:fixed-rate}
    f(x_n)-f^*\leq \frac{2LD^2}{n+1},\qquad n\geq2.
\end{equation}
Consequently, $f(x_n)\to f^*$.
\end{theorem}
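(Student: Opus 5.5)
The plan is to reduce the problem to the classical Frank--Wolfe analysis on the compact convex set $K$. There are three steps: feasibility of the iterates, the certificate \eqref{eq:gap-certificate}, and the standard induction with step $\alpha_n=2/(n+1)$.

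\emph{Step 1 (well-posedness and feasibility).} Since $C$ is compact and $X$ is closed, $K=C\cap X$ is compact and convex. It is nonempty because $x_1\in\mathcal L(x_1)\subset C$ and $x_1\in X$. Hence the oracle minimizes a continuous linear function over a nonempty compact set, so $s_n$ exists. By induction, if $x_n\in K$ then $x_{n+1}=(1-\alpha_n)x_n+\alpha_n s_n$ is a convex combination of two points of $K$ with $\alpha_n\in(0,1]$, so $x_{n+1}\in K$ by convexity.

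\emph{Step 2 (a solution lies in $K$).} Take any $x^*\in X^*$. Then $f(x^*)=f^*\le f(x_1)$, so $x^*\in\mathcal L(x_1)\subset C$, and $x^*\in X$; hence $x^*\in K$. Consequently \eqref{eq:gap-certificate} applies at every iterate: $h_n:=f(x_n)-f^*\le g_n$. I also note that $f^*\le f(x)$ for every $x\in K\subset X$, so $h_n\ge0$.

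\emph{Step 3 (recursion and induction).} Apply \eqref{eq:descent-lemma} on $K$ with $x=x_n$ and $y=x_{n+1}$. Using $\norm{s_n-x_n}\le D$, this gives
\[
f(x_{n+1})\le f(x_n)-\alpha_n g_n+\tfrac{L}{2}\alpha_n^2D^2 .
\]
Subtracting $f^*$ and using $g_n\ge h_n$ yields
\[
h_{n+1}\le(1-\alpha_n)h_n+\tfrac{L D^2}{2}\alpha_n^2 .
\]
For $n=1$ we have $\alpha_1=1$, so $h_2\le LD^2/2\le 2LD^2/3$, which is the base case. For the inductive step, assume $h_n\le 2LD^2/(n+1)$ with $n\ge2$. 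Then
\[
h_{n+1}\le \frac{n-1}{n+1}\cdot\frac{2LD^2}{n+1}+\frac{2LD^2}{(n+1)^2}=\frac{2LD^2\, n}{(n+1)^2}\le\frac{2LD^2}{n+2},
\]
where the last inequality holds because $n(n+2)\le (n+1)^2$. This proves \eqref{eq:fixed-rate}, and letting $n\to\infty$ gives $f(x_n)\to f^*$.

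The only step that departs from the textbook argument is Step 2, namely ensuring that the restricted oracle still certifies \emph{global} optimality. This is where the hypothesis $\mathcal L(x_1)\subset C$ is used; everything else is the routine compact-set Frank--Wolfe computation. One should also check that the Lipschitz estimate is used only on $K$, which is fine because $x_n$, $s_n$ and $x_{n+1}$ all lie in the convex set $K$.
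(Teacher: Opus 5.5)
Your proof is correct and follows essentially the same route as the paper: feasibility by convexity of $K$, the use of $\mathcal L(x_1)\subset C$ to place a global minimizer in $K$, and the recursion $h_{n+1}\le(1-\alpha_n)h_n+\tfrac{L}{2}\alpha_n^2D^2$ closed by the same induction. The only cosmetic difference is that the paper substitutes $x^*$ directly into the oracle inequality and then applies convexity, which is exactly your step $g_n\ge h_n$; your explicit check that $s_n$ exists by compactness of $K$ is a detail the paper leaves implicit.
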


\begin{proof}
Since $x_1\in\mathcal L(x_1)\subset C$ and $x_1\in X$, we have $x_1\in K$. If $x_n\in K$, then $s_n\in K$ by construction and, because $K$ is convex and $\alpha_n\in[0,1]$, $x_{n+1}=(1-\alpha_n)x_n+\alpha_n s_n\in K$. Thus $x_n\in K$ for every $n$.

Let $x^*\in X^*$. Since $f(x^*)\leq f(x_1)$, we have $x^*\in\mathcal L(x_1)\subset C$, and hence $x^*\in K$. By \eqref{eq:descent-lemma} and the definition of $s_n$,
\begin{align*}
    f(x_{n+1})
    &\leq f(x_n)+\alpha_n\ip{\nabla f(x_n)}{s_n-x_n}
       +\frac L2\alpha_n^2\norm{s_n-x_n}^2 \\
    &\leq f(x_n)+\alpha_n\ip{\nabla f(x_n)}{x^*-x_n}
       +\frac L2\alpha_n^2D^2 \\
    &\leq (1-\alpha_n)f(x_n)+\alpha_nf^*+\frac L2\alpha_n^2D^2,
\end{align*}
where the last inequality follows from convexity of $f$. With $\Delta_n:=f(x_n)-f^*$, this yields
\begin{equation}\label{eq:fixed-recurrence}
    \Delta_{n+1}\leq (1-\alpha_n)\Delta_n+\frac L2\alpha_n^2D^2.
\end{equation}
For $n=1$, $\alpha_1=1$, so \eqref{eq:fixed-recurrence} gives $\Delta_2\leq LD^2/2\leq 2LD^2/3$. Suppose now that $n\geq2$ and that $\Delta_n\leq 2LD^2/(n+1)$. Since $\alpha_n=2/(n+1)$,
\begin{align*}
    \Delta_{n+1}
    &\leq \frac{n-1}{n+1}\frac{2LD^2}{n+1}
       +\frac L2\frac{4D^2}{(n+1)^2}
     =\frac{2nLD^2}{(n+1)^2}
     \leq \frac{2LD^2}{n+2}.
\end{align*}
The last inequality is equivalent to $n(n+2)\leq(n+1)^2$. Induction proves \eqref{eq:fixed-rate}, and the convergence of $f(x_n)$ to $f^*$ follows immediately.
\end{proof}

\begin{corollary}\label{cor:fixed-gap}
Under the assumptions of Theorem~\ref{thm:fixed-rate}, if Algorithm~\ref{alg:fixed} stops at an iterate $x_n$ with $g_n\leq\varepsilon$, then $f(x_n)-f^*\leq\varepsilon$.
\end{corollary}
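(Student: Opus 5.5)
The plan is to reduce the claim to the gap certificate \eqref{eq:gap-certificate}, which holds as soon as the compact set used by the oracle contains a global solution. It therefore suffices to check two things at the stopping iterate: that $x_n\in K$, so that $g_n=g_K(x_n)$ is the Frank--Wolfe gap over $K$, and that $K\cap X^*\neq\varnothing$.

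\textbf{Feasibility of the stopping iterate.} I would take $x_n\in K$ from the first part of Theorem~\ref{thm:fixed-rate}. That part is a pure induction on convexity of $K$ and does not depend on running the algorithm without termination. It therefore applies to every iterate produced before the stop.

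\textbf{A solution lies in $K$.} I would pick $x^*\in X^*$, which exists by assumption. Since $f(x^*)=f^*\leq f(x_1)$, we have $x^*\in\mathcal L(x_1)\subset C$, and because $x^*\in X$ it follows that $x^*\in K$.

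\textbf{The chain of inequalities.} First, convexity of $f$ gives $f(x_n)-f^*\leq\ip{\nabla f(x_n)}{x_n-x^*}$. Second, since $s_n$ minimizes $\ip{\nabla f(x_n)}{s-x_n}$ over $K$ and $x^*\in K$, we get $\ip{\nabla f(x_n)}{s_n-x_n}\leq\ip{\nabla f(x_n)}{x^*-x_n}$, that is, $\ip{\nabla f(x_n)}{x_n-x^*}\leq g_n$. Third, combining these two bounds with the stopping test $g_n\leq\varepsilon$ yields $f(x_n)-f^*\leq\varepsilon$.

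There is no genuine obstacle here. The only point needing care is that the membership $x_n\in K$ is established independently of the no-early-termination hypothesis in Theorem~\ref{thm:fixed-rate}, which holds as noted above.
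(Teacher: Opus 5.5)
Your proposal is correct and matches the paper's proof, which simply invokes the certificate \eqref{eq:gap-certificate} because $K$ contains a global solution; you spell out $x^*\in\mathcal L(x_1)\subset C$ and the two inequalities explicitly. Your remark that $x_n\in K$ does not depend on the no-early-termination hypothesis is a useful clarification that the paper leaves implicit.
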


\begin{proof}
This is exactly the certificate \eqref{eq:gap-certificate}, since $K$ contains a global solution.
\end{proof}

\section{Dynamic compact restrictions}

The second method replaces the single compact set by a decreasing family of compact restrictions. At iteration $n$, the linear oracle is solved over the intersection of all compact sets generated so far. The key requirement is that the new compact set contains the objective sublevel set associated with the current iterate.

\begin{algorithm}[t]
\caption{Dynamic compact-restriction conditional gradient method}\label{alg:dynamic}
\begin{algorithmic}[1]
\Require $x_1\in X$, tolerance $\varepsilon>0$, and a rule that produces compact convex sets $C_n\subset\E$ with $\mathcal L(x_n)\subset C_n$.
\State Construct $C_1$ and set $K_1:=X\cap C_1$.
\For{$n=1,2,\ldots$}
    \State Compute
    \[
        s_n\in\argmin_{s\in K_n}\ip{\nabla f(x_n)}{s-x_n}.
    \]
    \State Set $g_n:=\ip{\nabla f(x_n)}{x_n-s_n}$.
    \If{$g_n\leq\varepsilon$}
        \State \textbf{stop}
    \EndIf
    \State Set $\alpha_n:=2/(n+1)$ and update
    \[
        x_{n+1}:=x_n+\alpha_n(s_n-x_n).
    \]
    \State Construct $C_{n+1}$ satisfying $\mathcal L(x_{n+1})\subset C_{n+1}$ and set
    \[
        K_{n+1}:=X\cap\bigcap_{i=1}^{n+1}C_i.
    \]
\EndFor
\end{algorithmic}
\end{algorithm}

\begin{theorem}\label{thm:dynamic-rate}
Let $X\subset\E$ be nonempty, closed and convex, and let $f$ be convex and differentiable on an open set containing $K_1:=X\cap C_1$. Assume that $X^*\neq\varnothing$, that $\nabla f$ is $L$-Lipschitz continuous on $K_1$, and that each $C_n$ generated by Algorithm~\ref{alg:dynamic} is compact and convex and satisfies $\mathcal L(x_n)\subset C_n$. Let $\{x_n\}$ be generated by Algorithm~\ref{alg:dynamic} without early termination, and set $D_1:=\diam(K_1)$. Then $x_n\in K_n$ and $X^*\subset K_n$ for all $n\geq1$. Moreover,
\begin{equation}\label{eq:dynamic-rate}
    f(x_n)-f^*\leq \frac{2LD_1^2}{n+1},\qquad n\geq2.
\end{equation}
Consequently, $f(x_n)\to f^*$.
\end{theorem}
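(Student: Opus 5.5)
The plan is to mirror the proof of Theorem~\ref{thm:fixed-rate}, with the extra work of checking that the nested sets $K_n$ behave like the single set $K$. We first prove $x_n\in K_n$ and $X^*\subset K_n$ together by induction on $n$.

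For $n=1$, $x_1\in\mathcal L(x_1)\subset C_1$ and $x_1\in X$, so $x_1\in K_1$. Every $x^*\in X^*$ satisfies $f(x^*)=f^*\le f(x_1)$, so $x^*\in\mathcal L(x_1)\subset C_1$ and $X^*\subset K_1$.

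For the inductive step, assume $x_n\in K_n$ and $X^*\subset K_n$. Since $s_n\in K_n$ and $K_n$ is convex (an intersection of convex sets), $x_{n+1}=(1-\alpha_n)x_n+\alpha_n s_n\in K_n\subset X\cap\bigcap_{i\le n}C_i$. Moreover $x_{n+1}\in\mathcal L(x_{n+1})\subset C_{n+1}$, so $x_{n+1}\in K_{n+1}=K_n\cap C_{n+1}$. Likewise $f(x^*)=f^*\le f(x_{n+1})$ gives $X^*\subset\mathcal L(x_{n+1})\subset C_{n+1}$, hence $X^*\subset K_{n+1}$. Each $K_n$ is also compact and nonempty, being a closed subset of the compact $C_1$ that contains $x_n$, so the oracle is well posed.

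For the rate, the key point is monotonicity: $K_n\subset K_1$ for all $n$. Consequently $x_n,s_n,x_{n+1}\in K_1$, and $\nabla f$ is $L$-Lipschitz on the convex set $K_1$, so the descent estimate \eqref{eq:descent-lemma} applies with $\norm{s_n-x_n}\le\diam(K_n)\le D_1$. We fix $x^*\in X^*\subset K_n$. Minimality of $s_n$ over $K_n$ gives $\ip{\nabla f(x_n)}{s_n-x_n}\le\ip{\nabla f(x_n)}{x^*-x_n}\le f^*-f(x_n)$. This yields the same recurrence \eqref{eq:fixed-recurrence} with $D$ replaced by $D_1$:
\[
\Delta_{n+1}\le(1-\alpha_n)\Delta_n+\tfrac L2\alpha_n^2D_1^2 .
\]
The induction on this recurrence is then copied verbatim from Theorem~\ref{thm:fixed-rate}: the base case uses $\alpha_1=1$ to get $\Delta_2\le LD_1^2/2\le 2LD_1^2/3$, and the step uses $n(n+2)\le(n+1)^2$. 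Convergence of $f(x_n)$ to $f^*$ follows.

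The main obstacle, though mild, is the simultaneous induction guaranteeing both $x_{n+1}\in K_{n+1}$ and $X^*\subset K_{n+1}$, i.e.\ that shrinking the restriction never cuts off the iterate or the solution set. The hypothesis $\mathcal L(x_{n+1})\subset C_{n+1}$ is used precisely here. A second point to state explicitly is that the Lipschitz constant and diameter are only assumed on $K_1$, so the nesting $K_n\subset K_1$ is needed to justify using $L$ and $D_1$ at every iteration.
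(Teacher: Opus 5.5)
Your proposal is correct and follows essentially the same route as the paper: a simultaneous induction on $x_n\in K_n$ and $X^*\subset K_n$ driven by $\mathcal L(x_{n+1})\subset C_{n+1}$, then the nesting $K_n\subset K_1$ to reuse $L$ and $D_1$ in the recurrence and induction of Theorem~\ref{thm:fixed-rate}. Your explicit remarks that each $K_n$ is nonempty and compact, and that the iterates stay in $K_1$ where $L$ and $D_1$ apply, are details the paper leaves implicit.
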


\begin{proof}
We first show that the iterates and the solution set remain in the current restriction. For $n=1$, $x_1\in\mathcal L(x_1)\subset C_1$ and $x_1\in X$, so $x_1\in K_1$. Also, for any $x^*\in X^*$, $f(x^*)\leq f(x_1)$, hence $x^*\in C_1$ and $x^*\in K_1$.

Assume that $x_n\in K_n$ and $X^*\subset K_n$. Since $K_n$ is convex, $s_n\in K_n$ and $\alpha_n\in[0,1]$ imply $x_{n+1}\in K_n$. By construction, $x_{n+1}\in\mathcal L(x_{n+1})\subset C_{n+1}$, so $x_{n+1}\in K_{n+1}$. If $x^*\in X^*$, then $f(x^*)\leq f(x_{n+1})$, and hence $x^*\in C_{n+1}$. Since also $x^*\in K_n$, we obtain $x^*\in K_{n+1}$. The induction is complete.

Because $K_n\subset K_1$, we have $\norm{s_n-x_n}\leq D_1$. The descent argument in the proof of Theorem~\ref{thm:fixed-rate}, with $K_n$ in place of $K$, gives
\[
    \Delta_{n+1}\leq(1-\alpha_n)\Delta_n+\frac L2\alpha_n^2D_1^2,
    \qquad \Delta_n:=f(x_n)-f^*.
\]
The same induction as in Theorem~\ref{thm:fixed-rate} proves \eqref{eq:dynamic-rate}.
\end{proof}

\begin{corollary}\label{cor:dynamic-gap}
Under the assumptions of Theorem~\ref{thm:dynamic-rate}, if Algorithm~\ref{alg:dynamic} stops at an iterate $x_n$ with $g_n\leq\varepsilon$, then $f(x_n)-f^*\leq\varepsilon$.
\end{corollary}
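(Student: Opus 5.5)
The argument mirrors Corollary~\ref{cor:fixed-gap}: we reduce the claim to the certificate \eqref{eq:gap-certificate}, applied with the compact set $K=K_n$ that the oracle uses at the stopping iteration. That certificate needs only three things. First, $K_n$ must be convex and compact, so that $s_n$ exists. Second, the current iterate must satisfy $x_n\in K_n$. Third, $K_n$ must contain some global solution $x^*\in X^*$.

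All three come from Theorem~\ref{thm:dynamic-rate} and the construction in Algorithm~\ref{alg:dynamic}. The set $K_n=X\cap\bigcap_{i=1}^n C_i$ is an intersection of a closed convex set with compact convex sets, so it is compact and convex. It is nonempty because it contains $x_n$. The membership facts $x_n\in K_n$ and $X^*\subset K_n$ were proved by induction in Theorem~\ref{thm:dynamic-rate}.

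One point needs care. That induction was stated for a run without early termination, whereas here the algorithm stops at step $n$. I would observe that the inductive step from $n$ to $n+1$ only uses iterations that were actually carried out. So if the algorithm reaches iteration $n$ and computes $g_n$, the same induction, truncated at index $n$, still gives $x_n\in K_n$ and $X^*\subset K_n$. This truncation is the only potential subtlety, and it is bookkeeping rather than a real obstacle.

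With these facts in hand, fix $x^*\in X^*\subset K_n$. Convexity and differentiability of $f$ on an open set containing $K_1\supset K_n$ give
\[
f(x_n)-f^*\le\ip{\nabla f(x_n)}{x_n-x^*}.
\]
Since $s_n$ minimizes $\ip{\nabla f(x_n)}{s-x_n}$ over $K_n$ and $x^*\in K_n$, we have $\ip{\nabla f(x_n)}{s_n-x_n}\le \ip{\nabla f(x_n)}{x^*-x_n}$. Rearranging, the right-hand side above is at most $\ip{\nabla f(x_n)}{x_n-s_n}=g_n$. The stopping rule gives $g_n\le\varepsilon$, and therefore $f(x_n)-f^*\le\varepsilon$.
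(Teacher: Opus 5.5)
Your proposal is correct and is the argument the paper intends: the paper gives no separate proof here, but, as for Corollary~\ref{cor:fixed-gap}, the claim is the certificate \eqref{eq:gap-certificate} applied with $K=K_n$, using $x_n\in K_n$ and $X^*\subset K_n$ from Theorem~\ref{thm:dynamic-rate}. Your remark that this induction runs only through the iterations actually performed, and so survives early termination, is a sensible bit of care the paper leaves implicit.
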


\begin{remark}\label{rem:compactness}
The finite-dimensional setting is important for the most direct compactness claims. In infinite-dimensional Hilbert spaces, bounded closed convex sets need not be compact. An infinite-dimensional version would require replacing compactness by a suitable weak compactness framework and ensuring existence of the linear oracle under weak lower semicontinuity assumptions.
\end{remark}

\section{Constructing compact restrictions}

This section records practical ways to produce sets satisfying the assumptions of Algorithms~\ref{alg:fixed} and \ref{alg:dynamic}.

\subsection{Exact sublevel restrictions}

If $f$ is convex and coercive on $X$, or more generally if each sublevel set $\mathcal L(x_n)$ is bounded, then one may choose the exact sublevel restriction
\begin{equation}\label{eq:exact-sublevel-restriction}
    C_n:=\{z\in\E:f(z)\leq f(x_n)\}.
\end{equation}
When $f$ is convex, $C_n$ is convex. In finite dimensions, boundedness and closedness imply compactness. The dynamic feasible region then becomes
\begin{equation}\label{eq:dynamic-sublevel-region}
    K_n=X\cap\bigcap_{i=1}^n C_i
      =X\cap\{z\in\E:f(z)\leq\tau_n\},
    \qquad \tau_n:=\min_{1\leq i\leq n}f(x_i).
\end{equation}
This construction is objective-specific and can be much smaller than a ball restriction. Its drawback is that the linear minimization oracle over an exact sublevel set can be more expensive than the original structured linear oracle.
\url{https://github.com/tthanh05/frankwolfevariants}
\subsection{Restrictions from strong convexity}

\begin{proposition}\label{prop:strong-convex-ball}
Let $f$ be $\mu$-strongly convex and differentiable on a convex set containing $X$, with $\mu>0$. Then, for every $x\in X$,
\begin{equation}\label{eq:strong-ball-containment}
    \mathcal L(x)\subset X\cap B\!\left(x,\frac{2\norm{\nabla f(x)}}{\mu}\right).
\end{equation}
Consequently, in finite dimensions, the right-hand side is a valid compact restriction whenever it is intersected with the closed set $X$.
\end{proposition}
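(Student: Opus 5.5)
The plan is to use the strong convexity inequality for $f$ with base point $x$, combined with the Cauchy--Schwarz inequality, to bound the distance from $x$ of any point of $\mathcal L(x)$. Fix $x\in X$ and $y\in\mathcal L(x)$. Then $y\in X$ and $f(y)\leq f(x)$. Since $f$ is $\mu$-strongly convex and differentiable on a convex set containing $X$, the first-order characterization of strong convexity gives
\[
    f(y)\geq f(x)+\ip{\nabla f(x)}{y-x}+\frac{\mu}{2}\norm{y-x}^2 .
\]
This characterization is standard and holds on any convex set where $f$ is differentiable and $\mu$-strongly convex.

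Next I combine this with $f(y)\leq f(x)$ to obtain
\[
    \frac{\mu}{2}\norm{y-x}^2\leq \ip{\nabla f(x)}{x-y}\leq \norm{\nabla f(x)}\,\norm{y-x}.
\]
If $y=x$, the containment is trivial. Otherwise I divide by $\norm{y-x}>0$ and get $\norm{y-x}\leq 2\norm{\nabla f(x)}/\mu$. Hence $y\in B(x,2\norm{\nabla f(x)}/\mu)$, and since also $y\in X$, this proves \eqref{eq:strong-ball-containment}.

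For the compactness claim, note that the closed ball $B(x,r)$ is closed, bounded and convex. Its intersection with the closed convex set $X$ is therefore closed, bounded and convex. In finite dimensions this makes it compact by Heine--Borel. The ball itself can serve as the set $C$ (or $C_n$) required in Algorithms~\ref{alg:fixed} and~\ref{alg:dynamic}, because the containment shows $\mathcal L(x)\subset C$, and the algorithms then intersect $C$ with $X$.

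The only mildly delicate step is the first-order strong convexity inequality. I will take it as the standard definition or equivalent characterization on the convex domain where $f$ is differentiable. The remaining steps are one line each. The degenerate case $\nabla f(x)=0$ is also consistent: there the ball collapses to $\{x\}$, and indeed $x$ is then the unique minimizer, so $\mathcal L(x)=\{x\}$.
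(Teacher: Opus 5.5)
Your proposal is correct and follows the paper's proof essentially verbatim. You apply the first-order strong convexity inequality at $x$, combine it with $f(y)\le f(x)$ and Cauchy--Schwarz, and divide by $\norm{x-y}$ when $y\neq x$; your added remarks on compactness via Heine--Borel and on the degenerate case $\nabla f(x)=0$ are also correct and just make explicit what the paper leaves implicit.
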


\begin{proof}
Let $y\in\mathcal L(x)$. Strong convexity gives
\[
    f(y)\geq f(x)+\ip{\nabla f(x)}{y-x}+\frac\mu2\norm{y-x}^2.
\]
Since $f(y)\leq f(x)$, it follows that
\[
    \frac\mu2\norm{y-x}^2\leq \ip{\nabla f(x)}{x-y}\leq \norm{\nabla f(x)}\norm{x-y}.
\]
If $y=x$, there is nothing to prove. Otherwise, dividing by $\norm{x-y}$ yields \eqref{eq:strong-ball-containment}.
\end{proof}

\begin{remark}
The same proof works for a proper lower semicontinuous $\mu$-strongly convex function by replacing $\nabla f(x)$ with any subgradient $g\in\partial f(x)$, provided $\partial f(x)\neq\varnothing$:
\[
    \mathcal L(x)\subset X\cap B\!\left(x,\frac{2\norm{g}}{\mu}\right).
\]
There is no need to choose a subgradient of largest norm.
\end{remark}

Proposition~\ref{prop:strong-convex-ball} gives two immediate choices. In Algorithm~\ref{alg:fixed}, one may set
\[
    C=B\!\left(x_1,\frac{2\norm{\nabla f(x_1)}}{\mu}\right).
\]
In Algorithm~\ref{alg:dynamic}, one may set
\[
    C_n=B\!\left(x_n,\frac{2\norm{\nabla f(x_n)}}{\mu}\right).
\]
The dynamic construction can significantly localize the linear oracle if the gradient norms decrease along the run.

\subsection{Epigraph caps}

For some unbounded feasible regions, compactness can be obtained by bounding a scalar epigraph variable. A useful example is the epigraph of the $\ell_1$-norm,
\begin{equation}\label{eq:l1-epi-set}
    X=\{(u,t)\in\R^d\times\R:\norm{u}_1\leq t\}.
\end{equation}
If
\begin{equation}\label{eq:l1-objective}
    f(u,t)=\frac12\sum_{i=1}^d q_i(u_i-a_i)^2+\frac\eta2(t-b)^2,
    \qquad q_i>0,
    \quad \eta>0,
\end{equation}
then $f(u,t)\geq \eta(t-b)^2/2$. Hence $f(u,t)\leq f(u_n,t_n)$ implies
\begin{equation}\label{eq:cap-bound}
    t\leq b+\sqrt{\frac{2f(u_n,t_n)}{\eta}}.
\end{equation}
Therefore one may use the compact cap
\begin{equation}\label{eq:l1-cap}
    C_n=X\cap\{(u,t):0\leq t\leq R_n\},
    \qquad
    R_n:=b+\sqrt{\frac{2f(u_n,t_n)}{\eta}}.
\end{equation}
Since $\norm{u}_1\leq t\leq R_n$, this set is compact in finite dimensions.

\section{A nonsmooth conditional subgradient extension}\label{sec:nonsmooth}

This section gives a cleaned version of the nonsmooth extension. The statement is deliberately separated from the smooth theory because it requires a different oracle and a curvature condition involving $\epsilon$-subdifferentials.

For a proper convex function $f:\E\to(-\infty,+\infty]$ and $\epsilon\geq0$, the $\epsilon$-subdifferential at $x\in\dom f$ is
\begin{equation}\label{eq:epsilon-subdiff}
    \partial_\epsilon f(x):=\bigl\{d\in\E: f(y)\geq f(x)+\ip{d}{y-x}-\epsilon\quad\text{for all }y\in\E\bigr\}.
\end{equation}
Let $K\subset X$ be compact and convex. Define
\begin{equation}\label{eq:nonsmooth-phi}
    \phi_\epsilon(x):=f(x)+\min_{z\in K}\max_{d\in\partial_\epsilon f(x)}\ip{z-x}{d},
\end{equation}
and the corresponding primal-dual quantity
\begin{equation}\label{eq:nonsmooth-gap}
    G_\epsilon(x):=f(x)-\phi_\epsilon(x).
\end{equation}
The nonsmooth linear oracle chooses
\begin{equation}\label{eq:nonsmooth-oracle}
    s_n\in\argmin_{z\in K}\max_{d\in\partial_{\epsilon_n} f(x_n)}\ip{z-x_n}{d}.
\end{equation}
For $\epsilon>0$, define the curvature quantity
\begin{equation}\label{eq:nonsmooth-curvature}
    C_f(\epsilon)
    :=\sup_{\substack{x,s\in K,\,\gamma\in(0,1]\\ y=x+\gamma(s-x)}}
    \min_{d\in\partial_\epsilon f(x)}
    \frac{f(y)-f(x)-\ip{y-x}{d}}{\gamma^2}.
\end{equation}
This is the same type of curvature control used in nonsmooth Frank--Wolfe analyses \cite{ravi2017deterministic_nonsmooth_fw_coreset}.

\begin{lemma}\label{lem:nonsmooth-weak-duality}
For every $x\in K$, $\epsilon\geq0$ and $y\in K$,
\[
    \phi_\epsilon(x)\leq f(y)+\epsilon.
\]
In particular, if $x^*\in X^*\cap K$, then
\begin{equation}\label{eq:nonsmooth-gap-lower}
    G_\epsilon(x)\geq f(x)-f^*-\epsilon.
\end{equation}
\end{lemma}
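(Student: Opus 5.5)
The plan is to bound the min–max in the definition of $\phi_\epsilon(x)$ from above by evaluating it at the particular choice $z=y\in K$, and then to control the inner maximum using the defining inequality of the $\epsilon$-subdifferential \eqref{eq:epsilon-subdiff}.

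\textbf{Step 1.} Fix $x\in K$, $\epsilon\geq0$ and $y\in K$. Since $y$ is admissible in the outer minimization over $K$,
\[
    \phi_\epsilon(x)\leq f(x)+\sup_{d\in\partial_\epsilon f(x)}\ip{y-x}{d}.
\]
Here I write a supremum, since the paper's notation presumes the maximum is attained; the argument does not rely on attainment.

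\textbf{Step 2.} For each $d\in\partial_\epsilon f(x)$, the definition \eqref{eq:epsilon-subdiff} applied at the point $y$ gives
\[
    f(y)\geq f(x)+\ip{d}{y-x}-\epsilon,
    \quad\text{that is,}\quad
    \ip{y-x}{d}\leq f(y)-f(x)+\epsilon.
\]
This bound is uniform in $d$, so it passes to the supremum. Combining it with Step 1 gives $\phi_\epsilon(x)\leq f(y)+\epsilon$.

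One degenerate case needs a remark. If $\partial_\epsilon f(x)$ were empty, the inner max would be $-\infty$ and the inequality would hold trivially. In fact this case does not arise: $x\in K\subset X\subset\dom f$ (where $f$ is finite), and for a proper convex lower semicontinuous $f$ the set $\partial_\epsilon f(x)$ is nonempty when $\epsilon>0$. So only the uniform bound in Step 2 matters.

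\textbf{Step 3.} For the ``in particular'' statement, take $y=x^*\in X^*\cap K$. Then $\phi_\epsilon(x)\leq f^*+\epsilon$, and hence
\[
    G_\epsilon(x)=f(x)-\phi_\epsilon(x)\geq f(x)-f^*-\epsilon,
\]
which is \eqref{eq:nonsmooth-gap-lower}.

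The main care point is the order of the min and the max. The bound goes through because the outer operation is a minimum over $z$: choosing one particular $z$ can only increase the value. We then need the $\epsilon$-subgradient inequality for every $d$, which the definition supplies, so no minimax theorem is required.
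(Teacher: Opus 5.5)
Your proof is correct and is essentially the paper's argument: bound the outer minimum by the admissible choice $z=y$, control the inner maximum uniformly in $d$ via the $\epsilon$-subgradient inequality, and then set $y=x^*$. Your side remark that $\partial_\epsilon f(x)$ is nonempty tacitly assumes lower semicontinuity and $\epsilon>0$, but it is not needed, since, as you note, an empty $\partial_\epsilon f(x)$ makes the inequality trivial.
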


\begin{proof}
If $d\in\partial_\epsilon f(x)$, then $f(x)+\ip{y-x}{d}\leq f(y)+\epsilon$. Taking the maximum over $d\in\partial_\epsilon f(x)$ and then the minimum over $z\in K$ gives
\[
    \phi_\epsilon(x)
    \leq f(x)+\max_{d\in\partial_\epsilon f(x)}\ip{y-x}{d}
    \leq f(y)+\epsilon.
\]
The second assertion follows by choosing $y=x^*$.
\end{proof}

\begin{lemma}\label{lem:nonsmooth-descent}
Let $x_{n+1}=x_n+\alpha_n(s_n-x_n)$, where $s_n$ satisfies \eqref{eq:nonsmooth-oracle} and $\alpha_n\in[0,1]$. Then
\begin{equation}\label{eq:nonsmooth-descent}
    f(x_{n+1})\leq f(x_n)-\alpha_nG_{\epsilon_n}(x_n)+\alpha_n^2C_f(\epsilon_n).
\end{equation}
\end{lemma}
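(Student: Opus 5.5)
The plan is to apply the definition of the curvature quantity \eqref{eq:nonsmooth-curvature} to the triple $(x,s,\gamma)=(x_n,s_n,\alpha_n)$. The goal is an upper bound on $f(x_{n+1})-f(x_n)$ with a linear term in some $d\in\partial_{\epsilon_n}f(x_n)$. Then I bound that linear term by its maximum over $\partial_{\epsilon_n}f(x_n)$, which by the oracle \eqref{eq:nonsmooth-oracle} equals $-G_{\epsilon_n}(x_n)$.

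\textbf{Case $\alpha_n=0$.} Here $x_{n+1}=x_n$, so \eqref{eq:nonsmooth-descent} reduces to $0\le 0$ and holds trivially.

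\textbf{Case $\alpha_n\in(0,1]$.} Note $x_n,s_n\in K$ and $x_{n+1}=x_n+\alpha_n(s_n-x_n)$. So this triple is admissible in the supremum defining $C_f(\epsilon_n)$, which gives
\[
\min_{d\in\partial_{\epsilon_n}f(x_n)}\frac{f(x_{n+1})-f(x_n)-\alpha_n\ip{s_n-x_n}{d}}{\alpha_n^2}\le C_f(\epsilon_n).
\]
I need the minimum to be attained, say at some $\bar d$. The set $\partial_{\epsilon}f(x)$ is closed and convex, and it is bounded when $x$ lies in the interior of $\dom f$ (and nonempty for $\epsilon>0$). The expression is affine, hence continuous, in $d$, so compactness gives attainment. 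This is the same attainment already implicit in the use of $\max$ in \eqref{eq:nonsmooth-phi}. If attainment is in doubt, I would instead pick $d$ within $\delta$ of the infimum and let $\delta\downarrow0$ at the end. Multiplying by $\alpha_n^2$ gives
\[
f(x_{n+1})\le f(x_n)+\alpha_n\ip{s_n-x_n}{\bar d}+\alpha_n^2C_f(\epsilon_n).
\]

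\textbf{Bounding the linear term.} Since $\alpha_n\ge0$ and $\bar d\in\partial_{\epsilon_n}f(x_n)$,
\[
\alpha_n\ip{s_n-x_n}{\bar d}\le\alpha_n\max_{d\in\partial_{\epsilon_n}f(x_n)}\ip{s_n-x_n}{d}.
\]
Because $s_n$ solves the min-max oracle \eqref{eq:nonsmooth-oracle}, the right-hand maximum equals $\min_{z\in K}\max_d\ip{z-x_n}{d}$. By \eqref{eq:nonsmooth-phi} and \eqref{eq:nonsmooth-gap}, this is $\phi_{\epsilon_n}(x_n)-f(x_n)=-G_{\epsilon_n}(x_n)$. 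Substituting gives \eqref{eq:nonsmooth-descent}.

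The main obstacle is the attainment and finiteness issue: that $\partial_{\epsilon_n}f(x_n)$ is nonempty and compact, so the min in $C_f$ and the max in $\phi$ are well defined. I would handle it either by the standing assumption implicit in the definitions, or by the $\delta$-approximation argument above.
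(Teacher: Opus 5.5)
Your proof is correct and follows the paper's route: you instantiate the curvature quantity at $(x,s,\gamma)=(x_n,s_n,\alpha_n)$, bound the resulting linear term by $\max_{d\in\partial_{\epsilon_n}f(x_n)}\ip{s_n-x_n}{d}$, and identify that maximum with $-G_{\epsilon_n}(x_n)$ through the oracle. Your version is slightly more careful than the paper's in two ways: because $d$ enters the curvature quotient with a minus sign, your minimizing $\bar d$ is a maximizer of $\ip{s_n-x_n}{d}$, so the definition gives exactly the max-form bound (the paper's first display, with $\min_{d}$ in the linear term, claims more than the definition of $C_f$ provides, although its next step replaces it by the max and the final inequality is unaffected); and you treat $\alpha_n=0$ separately, a case that the restriction $\gamma\in(0,1]$ in the definition of $C_f$ does not cover.
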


\begin{proof}
By the definition of $C_f(\epsilon_n)$ with $x=x_n$, $s=s_n$ and $\gamma=\alpha_n$,
\[
    f(x_{n+1})\leq f(x_n)+\alpha_n\min_{d\in\partial_{\epsilon_n}f(x_n)}\ip{s_n-x_n}{d}+\alpha_n^2C_f(\epsilon_n).
\]
The minimum is bounded above by the maximum. Using the choice of $s_n$,
\begin{align*}
    f(x_{n+1})
    &\leq f(x_n)+\alpha_n\max_{d\in\partial_{\epsilon_n}f(x_n)}\ip{s_n-x_n}{d}
       +\alpha_n^2C_f(\epsilon_n)\\
    &= f(x_n)+\alpha_n\min_{z\in K}\max_{d\in\partial_{\epsilon_n}f(x_n)}\ip{z-x_n}{d}
       +\alpha_n^2C_f(\epsilon_n)\\
    &= f(x_n)-\alpha_nG_{\epsilon_n}(x_n)+\alpha_n^2C_f(\epsilon_n).
\end{align*}
\end{proof}

\begin{algorithm}[t]
\caption{Fixed compact-restriction conditional subgradient method}\label{alg:nonsmooth}
\begin{algorithmic}[1]
\Require $x_0\in X$, compact convex set $K\subset X$, and sequences $\alpha_n=2/(n+2)$, $\epsilon_n=\sqrt{\alpha_n}$.
\For{$n=0,1,2,\ldots$}
    \State Compute $s_n$ satisfying \eqref{eq:nonsmooth-oracle}.
    \State Update $x_{n+1}:=x_n+\alpha_n(s_n-x_n)$.
\EndFor
\end{algorithmic}
\end{algorithm}

\begin{theorem}\label{thm:nonsmooth-rate}
Let $f:\E\to\R$ be convex and continuous, and suppose that $X\subset\E$ is nonempty, closed and convex. Let $K\subset X$ be compact and convex, with $x_0\in K$ and $X^*\cap K\neq\varnothing$. Assume that $\partial_\epsilon f(x)$ is nonempty and compact for all $x\in K$ and $\epsilon\in(0,1]$, and that there exists $D_f>0$ such that
\begin{equation}\label{eq:curv-assumption}
    C_f(\epsilon)\leq\frac{D_f}{\epsilon},\qquad \epsilon\in(0,1].
\end{equation}
Let $\{x_n\}$ be generated by Algorithm~\ref{alg:nonsmooth}. Then, with $M:=1+D_f$,
\begin{equation}\label{eq:nonsmooth-rate}
    f(x_n)-f^*\leq 2M\sqrt{\frac{2}{n+2}},\qquad n\geq1.
\end{equation}
In particular, $f(x_n)\to f^*$.
\end{theorem}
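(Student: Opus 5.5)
The plan is to combine the two preceding lemmas into a scalar recursion for $\Delta_n:=f(x_n)-f^*$ and then close it by induction, much as in the proof of Theorem~\ref{thm:fixed-rate}.

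\textbf{Step 1 (iterates stay in $K$).} Since $x_0\in K$, $s_n\in K$ and $\alpha_n\in(0,1]$, convexity of $K$ gives $x_n\in K$ for all $n$. Hence Lemmas~\ref{lem:nonsmooth-weak-duality} and~\ref{lem:nonsmooth-descent} apply at every iterate. The assumption that $\partial_{\epsilon}f(x)$ is nonempty and compact for $\epsilon\in(0,1]$ makes the min and max in \eqref{eq:nonsmooth-phi}, \eqref{eq:nonsmooth-oracle} and \eqref{eq:nonsmooth-curvature} well defined. Here $\epsilon_n=\sqrt{\alpha_n}\in(0,1]$ because $\alpha_0=1$ and $\alpha_n$ decreases.

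\textbf{Step 2 (recursion).} Fix $x^*\in X^*\cap K$. Inserting \eqref{eq:nonsmooth-gap-lower} into \eqref{eq:nonsmooth-descent} and using \eqref{eq:curv-assumption} gives
\[
\Delta_{n+1}\leq \Delta_n-\alpha_n(\Delta_n-\epsilon_n)+\alpha_n^2\frac{D_f}{\epsilon_n}
=(1-\alpha_n)\Delta_n+\alpha_n\epsilon_n+\frac{\alpha_n^2D_f}{\epsilon_n}.
\]
With $\epsilon_n=\sqrt{\alpha_n}$, the last two terms combine to $(1+D_f)\alpha_n^{3/2}$. The recursion therefore becomes
\[
\Delta_{n+1}\leq(1-\alpha_n)\Delta_n+M\alpha_n^{3/2}.
\]

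\textbf{Step 3 (induction).} Since $\alpha_n=2/(n+2)$, the target bound \eqref{eq:nonsmooth-rate} reads $\Delta_n\leq 2M\sqrt{\alpha_n}$.

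For the base case $n=1$, we have $\alpha_0=1$, so $\Delta_1\leq M\leq 2M\sqrt{2/3}$.

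For the inductive step, assume $\Delta_n\leq 2M\sqrt{\alpha_n}$ with $n\geq1$. Then
\[
\Delta_{n+1}\leq M\sqrt{\alpha_n}\,(2-\alpha_n).
\]
It remains to check $\sqrt{\alpha_n}(2-\alpha_n)\leq 2\sqrt{\alpha_{n+1}}$. Put $k=n+2$. The inequality becomes $\frac{k-1}{k}\sqrt{\frac{k+1}{k}}\leq 1$, which is equivalent to $(k-1)^2(k+1)\leq k^3$. Expanding, this is $k^3-k^2-k+1\leq k^3$, i.e. $1\leq k(k+1)$, which holds for $k\geq1$. Convergence $f(x_n)\to f^*$ then follows immediately.

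\textbf{Main obstacle.} The only delicate point is choosing the induction hypothesis in exactly the form $2M\sqrt{\alpha_n}$, so that the factor $(2-\alpha_n)$ is absorbed. The elementary inequality above confirms that this form works. The remaining care is bookkeeping: checking that every $\epsilon_n$ lies in $(0,1]$, so that \eqref{eq:curv-assumption} applies at each step.
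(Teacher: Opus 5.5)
Your proposal is correct and follows the paper's proof essentially step for step. It uses the same recursion $\Delta_{n+1}\le(1-\alpha_n)\Delta_n+M\alpha_n^{3/2}$, the same base case, and the same inductive inequality: your $(k-1)^2(k+1)\le k^3$ with $k=n+2$ is exactly the paper's $(n+1)^2(n+3)\le(n+2)^3$, and your explicit checks that $x_n\in K$ and $\epsilon_n\in(0,1]$ make precise bookkeeping the paper leaves implicit.
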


\begin{proof}
Let $\Delta_n:=f(x_n)-f^*$. From Lemmas~\ref{lem:nonsmooth-weak-duality} and \ref{lem:nonsmooth-descent}, and from \eqref{eq:curv-assumption},
\begin{align*}
    \Delta_{n+1}
    &\leq \Delta_n-\alpha_n(\Delta_n-\epsilon_n)+\alpha_n^2\frac{D_f}{\epsilon_n}\\
    &=(1-\alpha_n)\Delta_n+\alpha_n\epsilon_n+D_f\frac{\alpha_n^2}{\epsilon_n}.
\end{align*}
With $\epsilon_n=\sqrt{\alpha_n}$, this becomes
\begin{equation}\label{eq:nonsmooth-rec}
    \Delta_{n+1}\leq(1-\alpha_n)\Delta_n+M\alpha_n^{3/2}.
\end{equation}
For $n=0$, $\alpha_0=\epsilon_0=1$. Applying \eqref{eq:nonsmooth-rec} at $n=0$ gives $\Delta_1\leq M$, and hence $\Delta_1\leq2M\sqrt{2/3}$.

Assume now that $\Delta_n\leq2M\sqrt{\alpha_n}$ for some $n\geq1$. Since $\alpha_n=2/(n+2)$, \eqref{eq:nonsmooth-rec} gives
\begin{align*}
    \Delta_{n+1}
    &\leq \frac{n}{n+2}2M\sqrt{\alpha_n}+M\alpha_n^{3/2}.
\end{align*}
It is enough to check that
\[
    \frac{2n}{n+2}\sqrt{\alpha_n}+\alpha_n^{3/2}\leq2\sqrt{\alpha_{n+1}}.
\]
Substituting $\alpha_n=2/(n+2)$ and $\alpha_{n+1}=2/(n+3)$, this inequality is equivalent to
\[
    \frac{n+1}{(n+2)^{3/2}}\leq\frac{1}{\sqrt{n+3}},
\]
or, after squaring, $(n+1)^2(n+3)\leq(n+2)^3$, which is true because it reduces to $0\leq n^2+5n+5$. Therefore $\Delta_{n+1}\leq2M\sqrt{\alpha_{n+1}}$, and the induction proves \eqref{eq:nonsmooth-rate}.
\end{proof}

\begin{remark}\label{rem:nonsmooth-ball}
If $f$ is $\mu$-strongly convex and $g_0\in\partial f(x_0)$, then Proposition~\ref{prop:strong-convex-ball} with subgradients gives
\[
    X^*\subset\mathcal L(x_0)\subset X\cap B\!\left(x_0,\frac{2\norm{g_0}}{\mu}\right).
\]
Thus this ball can be used as $K$ in Algorithm~\ref{alg:nonsmooth}, provided it is compact and the nonsmooth oracle is solvable. 
\end{remark}

\section{Numerical illustrations}\label{sec:numerics}

This section reports the numerical illustrations. The purpose is to show how compact restrictions affect the observed trajectories and objective errors on unbounded feasible regions. The experiments are not intended as a comprehensive benchmark against all projection-free and projection-based alternatives. The code for all the numerical experiments is available at \url{https:/github.com/...}.

\subsection{Hyperbola feasible region}

Consider the unbounded convex feasible region
\begin{equation}\label{eq:hyperbola-set}
    X=\{(x,y)\in\R^2: x>0,\; y\geq 1/x\}.
\end{equation}
For the experiment, the objective is
\begin{equation}\label{eq:hyperbola-objective}
    f(z)=\sum_{i=1}^4\frac{\mu_i}{2}\norm{z-c_i}_2^2+\frac\nu4\norm{z-d}_2^4,
\end{equation}
where $z=(x,y)$, $\mu=(0.40,0.35,0.30,0.25)$, $\nu=0.10$, and
\[
\begin{array}{ll}
    c_1=(2.0,0.20), & c_2=(2.7,0.10),\\
    c_3=(1.5,0.35), & c_4=(3.2,0.15),
\end{array}
\qquad d=(2.3,0.05).
\]
The quadratic part is strongly convex with modulus $\sum_{i=1}^4\mu_i$, and the quartic term is convex and differentiable. Hence the ball restrictions from Proposition~\ref{prop:strong-convex-ball} are available.

\begin{figure}[t]
    \centering
    \includegraphics[width=0.55\linewidth]{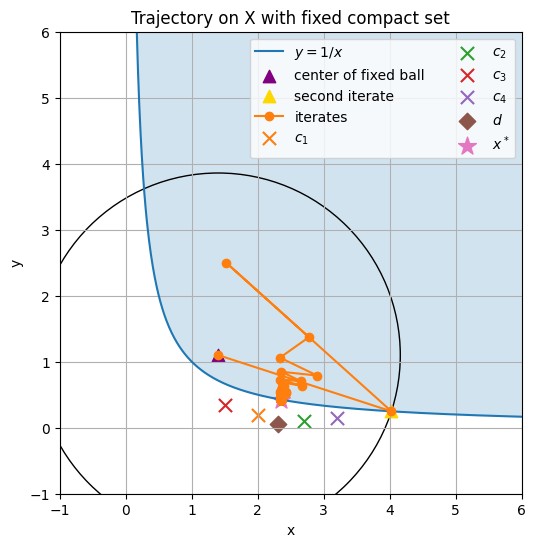}
    \caption{Trajectory generated by the fixed compact-restriction method on the hyperbola feasible region.}
    \label{fig:hyperbola-traj-fixed}
\end{figure}

Figure~\ref{fig:hyperbola-traj-fixed} shows the trajectory generated by the fixed compact restriction. The iterates remain in the compact ball used by the linear oracle and move towards the boundary region containing the constrained minimizer.

\begin{figure}[t]
    \centering
    \includegraphics[width=0.72\linewidth]{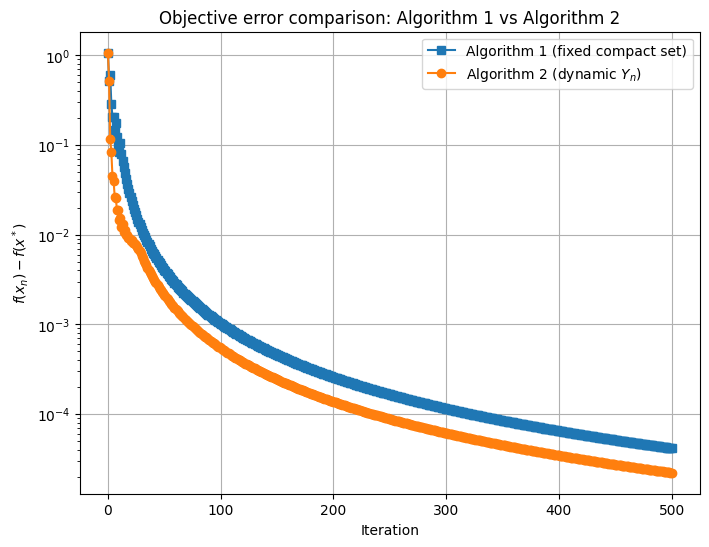}
    \caption{Objective-error comparison between the fixed compact restriction and the dynamic compact restriction on the hyperbola example.}
    \label{fig:hyperbola-error-fixed-dynamic}
\end{figure}

Figure~\ref{fig:hyperbola-error-fixed-dynamic} compares the objective errors for Algorithms~\ref{alg:fixed} and \ref{alg:dynamic}. Both methods reduce the error over the displayed iterations. In this run, the dynamic restriction stays below the fixed restriction after the initial stage, suggesting that the intersection of iteration-dependent compact sets localizes the search region more effectively.

\begin{figure}[t]
    \centering
    \begin{subfigure}[b]{0.48\linewidth}
        \centering
        \includegraphics[width=\linewidth]{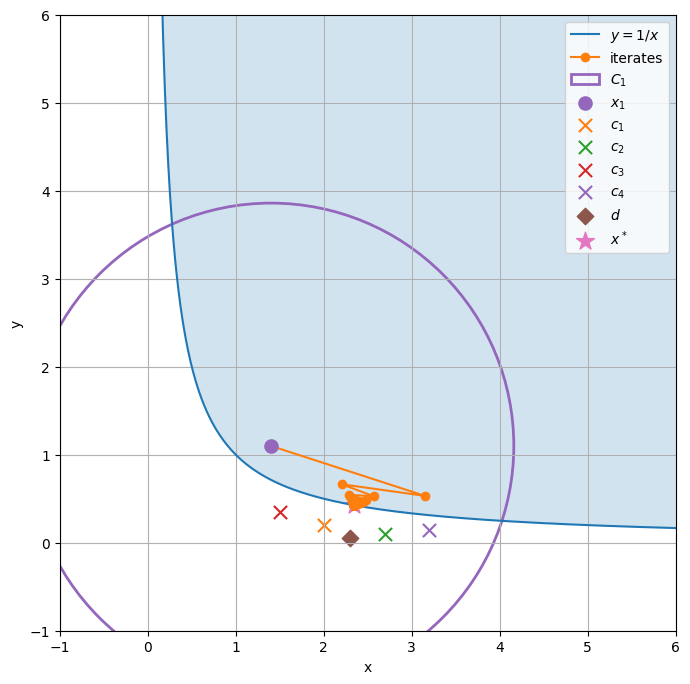}
        \caption{First ball.}
    \end{subfigure}\hfill
    \begin{subfigure}[b]{0.48\linewidth}
        \centering
        \includegraphics[width=\linewidth]{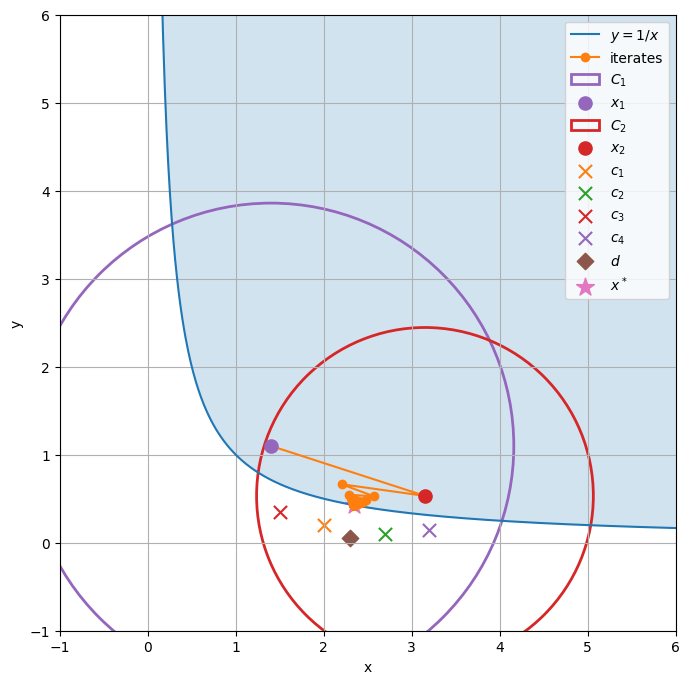}
        \caption{First two balls.}
    \end{subfigure}

    \vspace{0.5em}
    \begin{subfigure}[b]{0.58\linewidth}
        \centering
        \includegraphics[width=\linewidth]{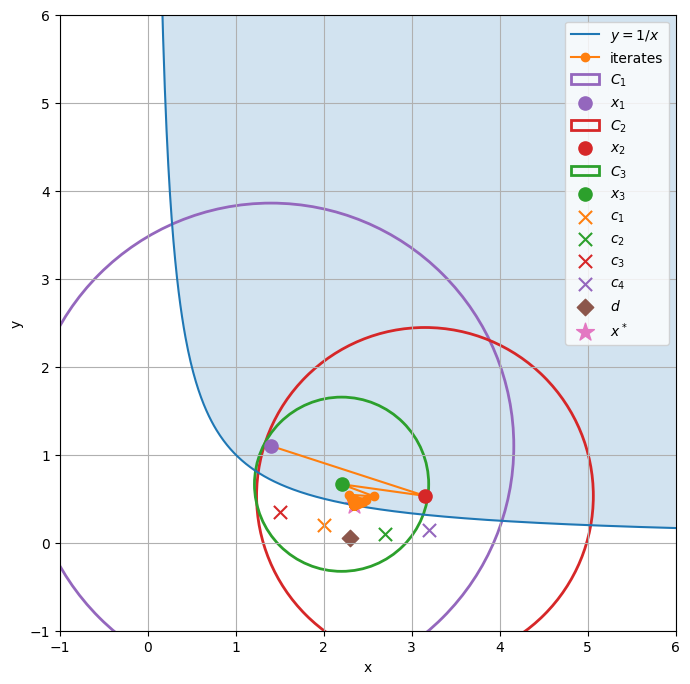}
        \caption{First three balls.}
    \end{subfigure}
    \caption{Dynamic-ball construction on the hyperbola feasible region. The plots show the same trajectory while adding the first one, two, and three compact balls.}
    \label{fig:hyperbola-dynamic-balls}
\end{figure}

Figure~\ref{fig:hyperbola-dynamic-balls} visualizes the first dynamic restrictions. The balls become more localized around the observed trajectory, which gives a geometric explanation for the smaller objective-error curve in Figure~\ref{fig:hyperbola-error-fixed-dynamic}.

\begin{figure}[t]
    \centering
    \includegraphics[width=0.72\linewidth]{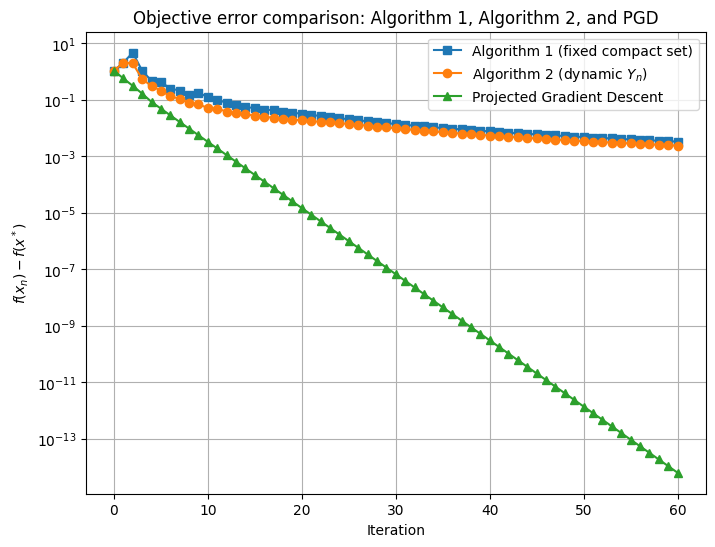}
    \caption{Objective-error comparison between the compact-restricted conditional-gradient methods and projected gradient descent on the hyperbola example.}
    \label{fig:hyperbola-error-pgd}
\end{figure}

Figure~\ref{fig:hyperbola-error-pgd} includes projected gradient descent (PGD) as a reference. PGD decreases the objective error faster on this low-dimensional example, but it requires solving a projection problem at each iteration \cite{levitin1966constrained,mahdavi2012stochastic}. The compact-restricted conditional-gradient methods retain a projection-free structure based on linear minimization over auxiliary compact sets.

\subsection{Shifted-cone feasible region}

We next consider
\begin{equation}\label{eq:shifted-cone}
    X=\{(x,y)\in\R^2:y\geq |x|-0.2\}.
\end{equation}
The objective is the strongly convex quadratic
\begin{equation}\label{eq:shifted-objective}
    f(z)=\frac12(z-c)^\top Q(z-c),
    \qquad
    Q=\begin{pmatrix}2.0&0.6\\0.6&1.2\end{pmatrix},
    \quad c=(0.9,0.0).
\end{equation}
The unconstrained minimizer is infeasible. The experiment compares dynamic ball restrictions with dynamic exact sublevel restrictions.

\begin{figure}[t]
    \centering
    \includegraphics[width=0.64\linewidth]{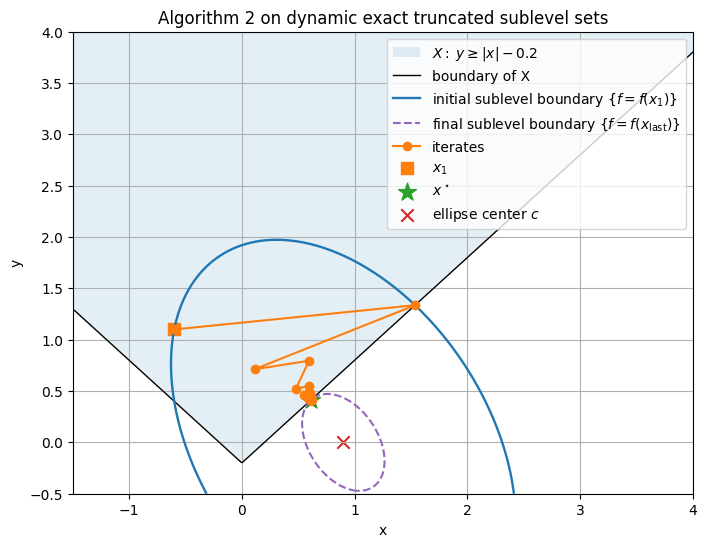}
    \caption{Trajectory of Algorithm~\ref{alg:dynamic} using dynamic exact sublevel restrictions on the shifted-cone feasible region.}
    \label{fig:shifted-trajectory}
\end{figure}

\begin{figure}[t]
    \centering
    \includegraphics[width=0.72\linewidth]{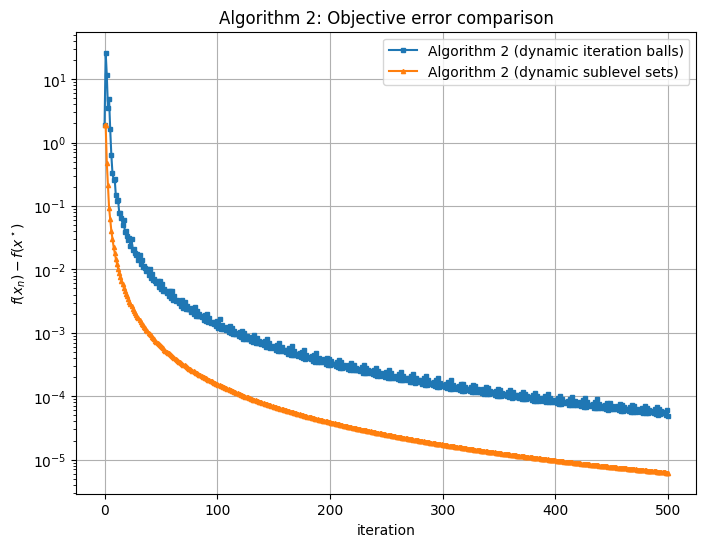}
    \caption{Objective-error comparison for Algorithm~\ref{alg:dynamic} using dynamic ball restrictions and dynamic exact sublevel restrictions on the shifted-cone example.}
    \label{fig:shifted-error}
\end{figure}

Figure~\ref{fig:shifted-trajectory} shows the trajectory under exact sublevel restrictions. Figure~\ref{fig:shifted-error} shows that the sublevel restriction gives a lower objective error than the ball restriction in this instance. This is consistent with the containment
\[
    X\cap\{z:f(z)\leq f(x_n)\}
    \subseteq X\cap B\!\left(x_n,\frac{2\norm{\nabla f(x_n)}}{\lambda_{\min}(Q)}\right),
\]
which follows from Proposition~\ref{prop:strong-convex-ball}. Thus the exact sublevel restriction is more objective-specific, although it may be more expensive to optimize over.

\subsection{\texorpdfstring{$\ell_1$}{l1}-epigraph feasible region}

The final example uses the epigraph of the $\ell_1$-norm,
\[
    X=\{(u,t)\in\R^d\times\R:\norm{u}_1\leq t\},
\]
with the strongly convex quadratic objective \eqref{eq:l1-objective}. The experiment uses $d=10000$, $\eta=500$, $b=0.2$, and $(u_1,t_1)=(0,0)$. The compact restrictions are the epigraph caps \eqref{eq:l1-cap}. For comparison, PGD requires a Euclidean projection onto the $\ell_1$-epigraph, which can be computed by solving a scalar thresholding equation of the form
\begin{equation}\label{eq:l1-proj-equation}
    \sum_{i=1}^d\max\{|v_i|-\lambda,0\}=s+\lambda,
\end{equation}
for a trial point $(v,s)$; see, for example, projection methods for $\ell_1$ and epigraphical constraints in \cite{duchi2008,condat2016,friedlander2022perspective}.

\begin{figure}[t]
    \centering
    \includegraphics[width=0.80\linewidth]{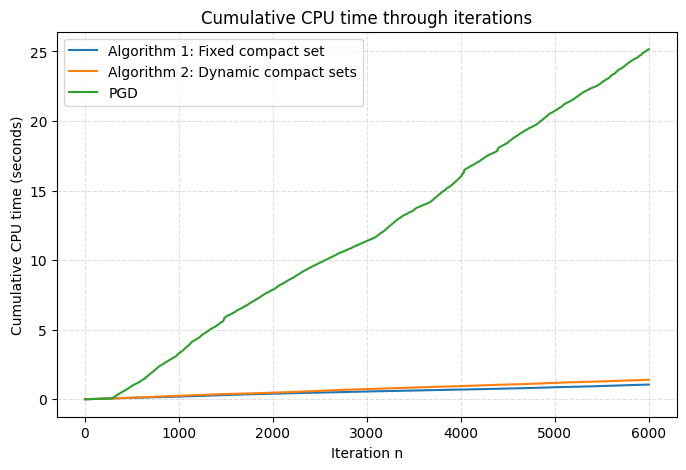}
    \caption{Cumulative CPU time for the fixed compact restriction, dynamic compact restriction, and PGD on the $\ell_1$-epigraph example.}
    \label{fig:l1-cpu}
\end{figure}

\begin{figure}[t]
    \centering
    \includegraphics[width=0.80\linewidth]{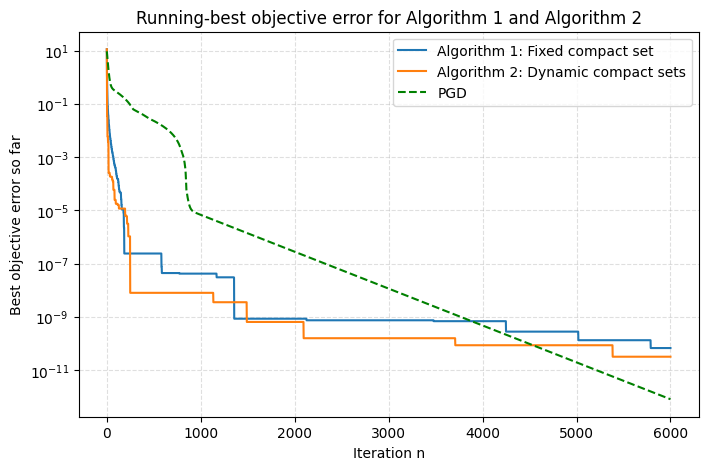}
    \caption{Running-best objective error for the fixed compact restriction, dynamic compact restriction, and PGD on the $\ell_1$-epigraph example.}
    \label{fig:l1-error}
\end{figure}

Figure~\ref{fig:l1-cpu} shows that the compact-restricted conditional-gradient methods are substantially cheaper in cumulative runtime for this implementation. Figure~\ref{fig:l1-error} reports the running-best objective error,
\[
    \min_{1\leq k\leq n}\{f(u_k,t_k)-f^*\},
\]
which is useful because open-loop Frank--Wolfe steps can produce nonmonotone raw objective values. The conditional-gradient variants make strong early progress, while PGD eventually reaches the smallest displayed running-best objective error at a much larger cumulative cost.

\FloatBarrier
\section{Conclusions and further directions}

The paper gives a compact-restriction principle for applying conditional-gradient steps to unbounded convex feasible regions. The fixed and dynamic methods keep all linear minimization oracle calls over compact sets, while preserving convergence to the global optimum of the original problem. The dynamic construction can provide smaller and more localized oracle regions, and exact sublevel restrictions give a particularly natural variant when the associated linear oracle is tractable.

Several directions remain open. First, a fully adaptive rule for choosing compact restrictions could balance oracle complexity against progress. Second, inexact linear minimization should be incorporated explicitly, since practical oracles over intersections of compact sets may be solved approximately. Third, the infinite-dimensional case requires a weak compactness framework.

\bibliographystyle{plain}
\bibliography{references.bib}

\end{document}